\documentclass[12pt,reqno]{amsart}
\usepackage[utf8]{inputenc}
\usepackage{amsmath}
\usepackage{amssymb,enumitem}
\usepackage{mathtools}
\usepackage{hyperref}
\usepackage{needspace}
\hypersetup{hidelinks}
\usepackage[
  backend=biber,
  style=alphabetic,
  url=false,
  isbn=false
]{biblatex}

\usepackage{ifthen}

\newtheorem {Theorem}    {Theorem}[section]
\newenvironment{Theorem*}
  {\Theorem}
  {\endTheorem}
\newtheorem {Lemma}      [Theorem]    {Lemma}

\newtheorem {Corollary}  [Theorem]    {Corollary}
\newtheorem {Proposition}[Theorem]    {Proposition}

\theoremstyle{definition}
\newtheorem {Definition} [Theorem]    {Definition}

\newtheorem {Remark}    [Theorem]    {Remark}
\newtheorem {Exercise}   {Exercise}
\newenvironment{Exercise*}
  {\Exercise}
  {\endExercise}

\newcounter{AbcT}

\numberwithin{equation}{section}

\newcommand {\R} {{\mathbb R}}

\newcommand {\cU} {{\mathcal U}}

\renewcommand{\limsup}{\varlimsup}
\DeclareMathOperator{\vol}{vol}

\DeclareMathOperator{\mdim}{mdim}

\DeclareMathOperator{\co}{co}

\newcommand {\IGNORE}[1]  {}

\newcommand {\htop}       {{h_{\textup{top}}}}
\renewcommand {\setminus}       {\smallsetminus}

\newcommand{\myentry}[1]{\ifthenelse{\equal{#1}{.}}{}{#1}}

\DeclareMathOperator{\Spoke}{Spoke}

\newcommand{\cW}{{\mathcal W}}

\title[An Entropy Bound for Mean Dimension of Open Covers]
{An Entropy Bound for Mean Dimension of Open Covers}\author{Tal Barak}
\date{\today}
\begin{document}

\begin{abstract}
We establish a counterpart, for individual finite open covers, of the
global entropy-mean-dimension comparison of Lindenstrauss and Weiss.
More precisely, the mean dimension of a finite open cover is
quantitatively bounded in terms of its topological entropy and
cardinality. In particular, a finite open cover with zero entropy has
zero mean dimension. The same comparison holds for actions of countable
amenable groups and in the sofic setting. It follows that every mean
dimension pair is an entropy pair and, in particular, that uniformly
positive mean dimension (UPMD) implies uniformly positive entropy (UPE).
The cover and pair implications answer Questions~2.27 and~4.9 of
Garc\'ia-Ramos and Gutman, respectively. The proof associates a polytope
to a minimal subcover and combines the Lindenstrauss-Weiss compression
argument with Maurey's empirical method to estimate the volumes of its
coordinate projections.
\end{abstract}
\maketitle
\tableofcontents

\section{Introduction}

Mean topological dimension was introduced by Gromov~\cite{Gromov-mean-dimension} as an
invariant for dynamical systems on infinite-dimensional spaces. Lindenstrauss and
Weiss~\cite{Lindenstrauss-Weiss} developed its theory and related it to
topological entropy.
In this paper we compare the two invariants for fixed open covers.

Let $(X,T)$ be a topological dynamical system, i.e.\ $X$ is a nonempty compact
metrizable space and $T\colon X\to X$ is a homeomorphism. Throughout this
paper, all covers are finite. For open covers $\cU$ and $\mathcal V$ of $X$
we write
$\mathcal V\succ\cU$ if every element of $\mathcal V$ is contained in some
element of $\cU$, and we set
$\cU\vee\mathcal V=\{U\cap V:U\in\cU,\ V\in\mathcal V\}$. The order of a cover $\gamma$ is defined by
\[
\operatorname{ord}(\gamma)
=
\max_{x\in X}\#\{U\in\gamma:x\in U\}-1,
\]
and the dimension captured by $\cU$ is defined to be
\[
\mathcal D(\cU)=\min_{\gamma\succ\cU}\operatorname{ord}(\gamma),
\]
where the minimum is over finite open covers $\gamma$ of $X$ refining $\cU$.
Equivalently, $\mathcal D(\cU)$ is the smallest dimension of a polyhedron
$P$ such that there exists a $\cU$-compatible map from $X$ to $P$; for the
definition see Section~\ref{sec:minimal-subcover}.
The dimension $\mathcal D$ is subadditive under joins,
\[ \mathcal D(\cU\vee\mathcal V)\leq\mathcal D(\cU)+\mathcal D(\mathcal V);\]
see~\cite{Lindenstrauss-Weiss}. Hence, writing
$\cU_a^b=\bigvee_{i=a}^bT^{-i}\cU$, the limit
\[
\mdim(X,T,\cU)=\lim_{n\to\infty}\frac{\mathcal D(\cU_0^{n-1})}{n}
\]
exists, and we define the mean dimension of $(X,T)$ by
\[ \mdim(X,T)=\sup_{\cU}\mdim(X,T,\cU),\] where the supremum is taken over the finite open covers of $X$.
Similarly, if $\mathcal N(\cU)$ denotes the minimal cardinality of a
subcover of $\cU$, we define the topological entropy of the cover to be
\[
\htop(X,T,\cU)
=
\lim_{n\to\infty}
\frac{\log\mathcal N(\cU_0^{n-1})}{n},
\]
and \[ \htop(X,T)=\sup_{\cU}\htop(X,T,\cU) \] is the topological entropy of
$(X,T)$, the supremum again being taken over the finite open covers of $X$.
All logarithms in this paper are natural.

Lindenstrauss and Weiss~\cite{Lindenstrauss-Weiss} introduced metric mean
dimension, a metric invariant related to mean dimension, and showed that
$\mdim(X,T)$ is bounded above by the lower metric mean dimension with respect
to every compatible metric. Since metric mean dimension vanishes when the
topological entropy is finite, finite topological entropy implies zero mean
dimension. This comparison was later sharpened: under the marker property
there is a compatible metric for which mean dimension coincides with upper
metric mean dimension~\cite{Lindenstrauss-Tsukamoto-GAFA}.
For the definition and further discussion of metric mean dimension we refer
to~\cite{Lindenstrauss-Weiss}; see also
\cite{Lindenstrauss-Tsukamoto-IEEE} and the introduction of
\cite{Barak-Lindenstrauss-UPMD}.
These results are global and do not give a direct comparison between
$\mdim(X,T,\cU)$ and $\htop(X,T,\cU)$ for a fixed cover $\cU$. At the
level of finite covers one always has the elementary inequality
\[
\mathcal D(\cW)\leq\mathcal N(\cW)-1.
\]
This is too weak for dynamical applications: linear growth of
$\mathcal D(\cU_0^{n-1})$ gives only a linear lower bound on
$\mathcal N(\cU_0^{n-1})$, whereas positive entropy requires exponential
growth. Thus a cover-level comparison strong enough to pass to dynamical
growth rates must involve $\log\mathcal N$ rather than $\mathcal N$ itself.

Throughout this article, a \emph{standard} open cover is a two-element
open cover $\cU=\{U,V\}$ such that neither $U$ nor $V$ is dense in $X$. We say
that $\cU$ \emph{distinguishes} a pair
$(x,y)\in X^2\setminus\Delta_X$ if $x\notin\overline V$ and
$y\notin\overline U$, where $\Delta_X=\{(x,x):x\in X\}$.
\begin{Definition}[Blanchard~\cite{blanchard-upe-cpe,Blanchard-entropy-pairs}]
\leavevmode
\begin{enumerate}[label=\textup{(\arabic*)}]
\item A topological dynamical system $(X,T)$ has \emph{uniformly positive
entropy} (UPE) if $\htop(X,T,\cU)>0$ for every standard open cover $\cU$.
\item A pair $(x,y)\in X^2\setminus\Delta_X$ is called an \emph{entropy pair}
if $\htop(X,T,\cU)>0$ for every standard open cover $\cU$ distinguishing
$(x,y)$.
\end{enumerate}
\end{Definition}

As Blanchard shows in~\cite{Blanchard-entropy-pairs}, $(X,T)$ has UPE if
and only if every pair of distinct points is an entropy pair, and a system has
positive entropy if and only if it has an entropy pair. Later, Kerr and Li
developed the corresponding local entropy theory for actions of sofic groups,
using combinatorial independence~\cite{Kerr-Li-sofic-independence}.

Garc\'ia-Ramos and Gutman introduced local
mean-dimension notions in the generality of sofic group
actions, including sofic UPMD and sofic mean dimension pairs
\cite[Definitions~3.3 and~4.1]{Garcia-Gutman-mdim}. For amenable actions they
also use the classical notion of UPMD
\cite[Definition~3.4]{Garcia-Gutman-mdim}. For $\mathbb Z$-actions we use the
following classical terminology.
\begin{Definition}[Garc\'ia-Ramos-Gutman]
\leavevmode
\begin{enumerate}[label=\textup{(\arabic*)}]
\item We say that a topological dynamical system $(X,T)$ has
\emph{uniformly positive mean dimension} (UPMD) if
$\mdim(X,T,\cU)>0$ for every standard open cover $\cU$.
\item A pair $(x,y)\in X^2\setminus\Delta_X$ is called a
\emph{mean dimension pair} if
$\mdim(X,T,\cU)>0$ for every standard open cover $\cU$ distinguishing
$(x,y)$.
\end{enumerate}
\end{Definition}

Garc\'ia-Ramos and Gutman asked whether zero sofic entropy of a finite open
cover implies zero sofic mean dimension, noting that the question was open
also for $\mathbb Z$-actions, and whether every sofic mean dimension pair is a
sofic entropy pair~\cite[Questions~2.27 and~4.9]{Garcia-Gutman-mdim}.

Independently and around the same time, Elon Lindenstrauss and the author were
led to the same notions and to the two-element case of this problem, in work that later appeared as~\cite{Barak-Lindenstrauss-UPMD}. They also considered the
system-level question~\cite[Question~1.3]{Barak-Lindenstrauss-UPMD}:
does every system with UPMD have UPE? That paper studies hub-and-spoke
systems and shows, among other things, that if a symbolic system $(X,\sigma)$
has UPE, then the associated hub-and-spoke system
$(\Spoke(X),\sigma)$ has UPMD~\cite{Barak-Lindenstrauss-UPMD}.

Our main result is the following quantitative finite-cover comparison.

\begin{Theorem}\label{thm:finite-cover-inequality}
Let $X$ be a nonempty compact metrizable space, let $n\geq1$, and let
$\{ \cU_i \}_{i=0}^{n-1}$ be a family of finite open covers of $X$. Set
$m=\max_i|\cU_i|$, $N=\sum_i|\cU_i|$ and $\cW=\bigvee_{i=0}^{n-1}\cU_i$.
Then
\[
\mathcal D(\cW)^3
\leq
\frac{2\pi e^3+1}{\log 2}\,
m^2N^2\log\mathcal N(\cW)
\leq
\frac{2\pi e^3+1}{\log 2}\,
m^4n^2\log\mathcal N(\cW).
\]
\end{Theorem}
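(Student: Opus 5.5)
The plan is to turn the combinatorial data of a minimal subcover of $\cW$ into a polytope in $\R^N$, and then to play two volume estimates against each other. A lower bound on volumes of coordinate projections will come from $\mathcal D(\cW)$ via a Lindenstrauss--Weiss type compression argument. An upper bound will come from $\mathcal N(\cW)$ via Maurey's empirical method.

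\emph{Step 1: the polytope and a compatible map.} Set $M=\mathcal N(\cW)$ and fix a minimal subcover $\{W_1,\dots,W_M\}$ of $\cW$. Write each element as $W_j=\bigcap_i U_{i,f_j(i)}$ with $U_{i,f_j(i)}\in\cU_i$. Identify $\R^N=\prod_i\R^{\cU_i}$ and put $v_j=(e_{f_j(i)})_{i}$, a vertex of the product of simplices $\Delta=\prod_i\Delta^{|\cU_i|-1}$. Let $P=\co\{v_1,\dots,v_M\}$. Take a partition of unity $\{\psi_j\}$ subordinate to $\{W_j\}$ and define $F(x)=\sum_j\psi_j(x)v_j\in P$. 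The $(i,U)$-coordinate of $F(x)$ is positive only if $x\in U$, so $F$ is compatible with every $\cU_i$ simultaneously, in a coordinatewise sense.

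\emph{Step 2: compression.} I would prove the following statement: if for some $k$ every $k$-dimensional coordinate projection of $P$ (onto a choice of $k$ coordinates, drawn from distinct factors in a suitable way) has volume below an explicit threshold of order $c^k/(m^k k!)$, then $F$ can be pushed off into a $\cW$-compatible map to a polyhedron of dimension less than $k$, so $\mathcal D(\cW)<k$. The pushing is the Lindenstrauss--Weiss argument: small volume gives a point outside the projected image, and radial projection from such missing points, face by face in the cubical/simplicial structure of $\Delta$, lowers the dimension. Compatibility is preserved because radial projection never creates a new positive coordinate. Taking the contrapositive with $k=\mathcal D(\cW)=:d$, some $d$-dimensional coordinate projection of $P$ has volume at least roughly $(c/m)^d/d!$.

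\emph{Step 3: Maurey.} Every point of $P$ is a convex combination of the $v_j$, and each $v_j$ has Euclidean norm $\sqrt n$. By Maurey's empirical method, every point of $P$ lies within Euclidean distance $\varepsilon$ of an average of $t\approx n/\varepsilon^2$ vertices (at most $N/\varepsilon^2$ in the coordinate bookkeeping). There are at most $M^t$ such averages, so any $d$-dimensional projection of $P$ is covered by $M^t$ balls of radius $\varepsilon$. Its volume is therefore at most $M^t\,\omega_d\varepsilon^d$, where $\omega_d\le(2\pi e/d)^{d/2}$ up to harmless factors; this is where the constants $2\pi e^3$ enter.

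\emph{Step 4: comparison.} Combining Steps 2 and 3 gives
\[
(c/m)^d/d!\le \exp\bigl((N/\varepsilon^2)\log M\bigr)(2\pi e/d)^{d/2}\varepsilon^d .
\]
Choose $\varepsilon^2\approx d/(m^2\cdot\text{const})$ so that the $d$-th power terms leave a deficit of $e^{-\Theta(d)}$. The displayed inequality then forces $N m^2\log M/d\gtrsim d$, and with careful bookkeeping (using $t$ relative to $N$ rather than $n$) it yields $d^3\le C\,m^2N^2\log M$. The second inequality in the theorem is immediate from $N\le mn$.

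I expect the main obstacle to be Step 2. One has to formulate the compression lemma so that it needs only \emph{coordinate} projections of $P$, which are what Maurey controls, and so that the volume threshold scales exactly like $m^{-d}/d!$. Getting the exponent $3$ and the stated constant depends on matching these two scalings precisely.
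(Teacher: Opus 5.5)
\textbf{There is a genuine gap, in Steps~2 and~4.} A lower bound of order $(c/m)^d/d!$ on a \emph{single} $d$-dimensional coordinate projection is far too weak, and the arithmetic in Step~4 does not go through.

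Such a bound actually holds for free, with no compression argument. Your $F$ is already a $\cW$-compatible map into $P$: the sets $\{z:z_{i,U_i}>0\text{ for all }i\}$, one for each choice $U_i\in\cU_i$, cover $P$ and pull back into elements of $\cW$. Hence $\dim P\geq d$, and some $\pi_JP$ with $|J|=d$ is full-dimensional. That projection then contains a simplex with vertices in $\{0,1\}^d$, of volume at least $1/d!$. A bound obtainable this cheaply cannot see entropy.

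Concretely, $1/d!\approx(e/d)^d$ decays super-exponentially. With your choice $\varepsilon^2\approx d/(\mathrm{const}\cdot m^2)$, the right-hand side of your displayed inequality exceeds the left-hand side for every $M\geq1$ once $d$ is moderately large, so it forces nothing. To extract information (for absolute $c$) you need $\varepsilon^2\lesssim 1/(m^2d)$, i.e.\ $t\gtrsim m^2Nd$, and this yields only $\log M\gtrsim 1/(m^2N)$. Your intermediate claim $d^2\lesssim m^2N\log M$ would be stronger than the theorem itself, and it is not actually derived.

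\textbf{The missing idea is to bound the \emph{sum} of the volumes of all $\binom Nd$ projections from below, not the maximum.}
\begin{enumerate}[label=\textup{(\roman*)}]
\item Run the compression for every $q\in(0,1/m)^N$. Each block $i$ of $F(x)$ has a coordinate at least $1/|\cU_i|\geq1/m$, which exceeds the corresponding coordinate of $q$.
\item Use a retraction onto the $(d-1)$-skeleton that preserves the sign of $y-q$ coordinatewise and fixes zero coordinates. It keeps that large coordinate positive, so every block stays nonzero and the pushed map remains $\cW$-compatible. ``No new positive coordinates'' alone does not give this, since radial projection can send small positive coordinates to $0$. This is where the factor $m^{-d}$ comes from.
\item Consequently, for every such $q$ some $J$ with $|J|=d$ has $q_J\in\pi_JP$. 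Comparing Lebesgue measure in the box $(0,1/m)^N$ gives $\sum_{|J|=d}\vol_d(\pi_JP)\geq m^{-d}$.
\item The union over projections now costs only $\binom Nd\leq(eN/d)^d$, an exponential factor. Maurey beats it with $t=\lceil(2eC_0mN/d)^2\rceil$, $C_0=\sqrt{\pi e/2}$, giving $\log M\geq d\log2/t$ and the stated bound.
\end{enumerate}

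Separately, Maurey should be applied inside each $\pi_JP\subseteq[0,1]^d$. There the coordinate variances are at most $1/4$ and the radius is $\sqrt{d/(4t)}$. Approximating in $\R^N$ with radius $\sqrt{n/t}$ and then projecting loses a factor $(4n/d)^{d/2}$. Even with the summed lower bound, that would give only $d^4\lesssim m^2N^2n\log M$, not the exponent~$3$.
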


We are not aware of a previous general estimate controlling
$\mathcal D(\cW)$ in terms of $\log\mathcal N(\cW)$ in this way.

Taking $\cU_i=T^{-i}\cU$, we get:

\begin{Corollary}\label{cor:fixed-cover}
Let $(X,T)$ be a topological dynamical system and let $\cU$ be a finite open
cover of $X$. Then
\[
\mdim(X,T,\cU)^3
\leq
\frac{2\pi e^3+1}{\log2}\,
|\cU|^4\,\htop(X,T,\cU).
\]
\end{Corollary}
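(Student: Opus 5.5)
The plan is to apply Theorem~\ref{thm:finite-cover-inequality} to the iterates of $\cU$ and then pass to the limit in $n$. Fix $n\geq1$ and set $\cU_i=T^{-i}\cU$ for $0\leq i\leq n-1$. Since $T$ is a homeomorphism, $U\mapsto T^{-1}U$ is a bijection from $\cU$ onto the family $T^{-i}\cU$ of open sets, and each $T^{-i}\cU$ is again a finite open cover of $X$. Hence $|\cU_i|=|\cU|$ for every $i$, so in the notation of the theorem $m=|\cU|$ and $N=n|\cU|$. By definition $\cW=\bigvee_{i=0}^{n-1}\cU_i=\cU_0^{n-1}$.

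The second inequality of Theorem~\ref{thm:finite-cover-inequality} (or the first with $N=n|\cU|$ substituted) gives
\[
\mathcal D(\cU_0^{n-1})^3\leq \frac{2\pi e^3+1}{\log2}\,|\cU|^4\,n^2\,\log\mathcal N(\cU_0^{n-1}).
\]
Dividing by $n^3$ yields
\[
\Bigl(\frac{\mathcal D(\cU_0^{n-1})}{n}\Bigr)^3\leq \frac{2\pi e^3+1}{\log2}\,|\cU|^4\,\frac{\log\mathcal N(\cU_0^{n-1})}{n}
\]
for every $n\geq1$.

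Now let $n\to\infty$. By the subadditivity of $\mathcal D$ and of $\log\mathcal N$ recalled in the introduction, both normalized sequences converge: the left side to $\mdim(X,T,\cU)^3$, because $t\mapsto t^3$ is continuous, and the right side to $\frac{2\pi e^3+1}{\log2}|\cU|^4\htop(X,T,\cU)$. The inequality is preserved in the limit, which proves the corollary.

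There is no genuine obstacle here, since all the work is in Theorem~\ref{thm:finite-cover-inequality}. The only points to check are that $|T^{-i}\cU|=|\cU|$, which uses the invertibility of $T$ (for a non-injective map one would only have $\leq$, which still suffices since $m$ and $N$ enter monotonically), and that the power $n^2$ in the theorem is exactly what cancels after normalizing by $n^3$.
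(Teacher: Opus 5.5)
Your proposal is correct and follows the paper's proof: apply Theorem~\ref{thm:finite-cover-inequality} to $\cU_i=T^{-i}\cU$ (so $m\leq|\cU|$), divide by $n^3$, and let $n\to\infty$. Your added checks (that $|T^{-i}\cU|=|\cU|$, and that the limits exist so the inequality passes to the limit) are exactly the details the paper leaves implicit.
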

Thus,
\[
\htop(X,T,\cU)=0
\Longrightarrow
\mdim(X,T,\cU)=0.
\]
\begin{Corollary}\label{cor:pairs}
Every mean dimension pair is an entropy pair. In particular, every system
with UPMD has UPE.
\end{Corollary}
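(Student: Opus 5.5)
The proof is a direct application of Corollary~\ref{cor:fixed-cover}, used in its contrapositive form: if $\mdim(X,T,\cU)>0$, then $\htop(X,T,\cU)>0$. To see this, note that $\htop(X,T,\cU)\geq 0$ always holds, since $\mathcal N(\cU_0^{n-1})\geq1$. If $\htop(X,T,\cU)=0$, the right-hand side of the inequality in Corollary~\ref{cor:fixed-cover} vanishes. This forces $\mdim(X,T,\cU)^3\leq 0$, and since $\mdim(X,T,\cU)\geq0$ we get $\mdim(X,T,\cU)=0$. Note that $|\cU|$ is finite, so the constant $\frac{2\pi e^3+1}{\log 2}|\cU|^4$ is a finite number. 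For a standard cover we simply have $|\cU|=2$.

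For the pair statement, I would let $(x,y)\in X^2\setminus\Delta_X$ be a mean dimension pair and let $\cU=\{U,V\}$ be any standard open cover distinguishing $(x,y)$. By the definition of a mean dimension pair, $\mdim(X,T,\cU)>0$. The contrapositive above then gives $\htop(X,T,\cU)>0$. Since $\cU$ was an arbitrary standard cover distinguishing $(x,y)$, the pair $(x,y)$ is an entropy pair by Blanchard's definition.

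For the system statement, I would argue in exactly the same way, with no distinguishing condition. UPMD means that $\mdim(X,T,\cU)>0$ for every standard open cover $\cU$. Hence $\htop(X,T,\cU)>0$ for every such $\cU$, which is UPE. Alternatively, one can combine the pair statement with Blanchard's characterization of UPE as every off-diagonal pair being an entropy pair. This route would require knowing that UPMD makes every off-diagonal pair a mean dimension pair, which holds because the covers distinguishing a pair are in particular standard. The direct argument avoids this step.

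There is no real obstacle here. All the analytic content is in Theorem~\ref{thm:finite-cover-inequality}, and hence in Corollary~\ref{cor:fixed-cover}. The only points to check are that both definitions quantify over the same class of covers, namely standard two-element open covers (respectively, those distinguishing the pair), and that both quantities are nonnegative, so that the cube inequality transfers positivity.
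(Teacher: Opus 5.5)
Your proposal is correct and is essentially the paper's own proof. Both take a standard cover (one distinguishing the pair, or an arbitrary one for the UPMD statement) and use Corollary~\ref{cor:fixed-cover} to pass from $\mdim(X,T,\cU)>0$ to $\htop(X,T,\cU)>0$.
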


The second statement of Corollary~\ref{cor:pairs} follows from the first,
since a system has UPMD exactly when every pair of distinct points is a mean
dimension pair, and, by Blanchard, has UPE exactly when every such pair is an
entropy pair.

The same conclusions hold for actions of countable amenable groups.
They also hold in the sofic setting. More precisely, if a countable
sofic group $G$ acts continuously on $X$ and $\Sigma$ is a fixed sofic
approximation, then for every finite open cover $\cU$,
\[
\mdim_\Sigma(X,G,\cU)^3
\leq
\frac{2\pi e^3+1}{\log2}\,
|\cU|^4\,h_\Sigma(X,G,\cU).
\]
In particular,
zero sofic entropy of a finite cover implies zero sofic mean dimension, and
every sofic mean dimension pair is a sofic entropy pair; see
Section~\ref{sec:sofic-cover-extension}.

This answers \cite[Questions~2.27 and~4.9]{Garcia-Gutman-mdim}: for
$\mathbb Z$-actions and countable amenable groups by
Corollaries~\ref{cor:fixed-cover} and~\ref{cor:pairs}, and in the original
sofic setting, for any fixed sofic approximation, by
Theorem~\ref{thm:sofic-fixed-cover-comparison} and
Corollary~\ref{cor:sofic-mean-dimension-pairs}.
Combined with~\cite{blanchard-upe-cpe} and
\cite{Barak-Lindenstrauss-UPMD}, Corollary~\ref{cor:pairs} also shows that
UPMD systems are weakly mixing and characterizes UPMD among hub-and-spoke
systems (Section~\ref{sec:consequences}).

The proof of Theorem~\ref{thm:finite-cover-inequality} uses the geometry of a
convex polytope. We start with a minimal subcover of $\cW$, encode its members
by vertices of a cube, and use a partition of unity to map $X$ into their
convex hull $K$. The compression argument of Lindenstrauss and
Weiss~\cite[Theorem~4.2]{Lindenstrauss-Weiss}, applied to this encoding, gives
a lower bound on the total volume of the $d$-dimensional coordinate
projections of $K$, where $d=\mathcal D(\cW)$. Maurey's empirical
method (see Pisier~\cite{Pisier-Maurey} and
\cite[Appetizer]{Vershynin-HDP-2}) bounds the volume of each such projection
in terms of the number of vertices, namely $\mathcal N(\cW)$.
Comparing the two gives the theorem.

Intuitively, the compression argument forces the $d$-dimensional coordinate
projections of $K$ to have total volume at least $m^{-d}$, while the convex hull of $M$ points in a $d$-dimensional cube is
small, since by Maurey's method it is covered by $M^t$ balls of radius about
$\sqrt{d/t}$. To beat the $\binom Nd$ projections we need
$t\approx(mN/d)^2$, and then $\log M\gtrsim d/t\approx d^3/(m^2N^2)$. 
We do not know whether this bound is optimal. 

\medskip

\noindent\textbf{Acknowledgments.}
The author is deeply grateful to Elon Lindenstrauss for introducing him to mean dimension and dynamics, for the many years of teaching, guidance, mathematical conversations, and professional advice that followed, and for helpful discussions about the present work. He also thanks his current advisor, Dorit Aharonov, for her support, encouragement, and generosity in allowing him the freedom to devote some time to questions outside the main direction of his current research.

\medskip

\section{A volume bound for convex hulls}
\label{sec:convex-volume}

Here and below, $\vol_k$ denotes $k$-dimensional Lebesgue measure, and for a
subset $A$ of a real vector space, we write $\co(A)$ for its convex hull.

Lemma~\ref{lem:convex-volume} is a form of Maurey's empirical
method~\cite{Pisier-Maurey},
\cite[Appetizer]{Vershynin-HDP-2}; we include the proof to keep the constant
explicit.

\begin{Lemma}[see~\cite{Matousek-Discrete-Geometry}, Chapter~13]
\label{lem:ball-volume}
Let $k\geq1$ be an integer, and let $B\subseteq\R^k$ be a Euclidean ball
of radius $R>0$. Then
\[
\vol_k(B)
\leq
\left(\frac{2\pi e}{k}\right)^{k/2}R^k.
\]
\end{Lemma}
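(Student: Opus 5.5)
The plan is to use the exact formula for the volume of the Euclidean ball together with a Stirling-type lower bound for the Gamma function. Since volume is translation invariant and scales by $R^k$ under dilation, we may assume $B$ is centered at the origin and reduce to $R=1$. We then have
\[
\vol_k(B)=\frac{\pi^{k/2}}{\Gamma\!\left(\frac k2+1\right)}R^k ,
\]
and it suffices to show
\[
\Gamma\!\left(\frac k2+1\right)\geq\left(\frac{k}{2e}\right)^{k/2}.
\]
Once this holds, $\pi^{k/2}(2e/k)^{k/2}=(2\pi e/k)^{k/2}$, which is the claimed bound.

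For the Gamma inequality we set $s=k/2>0$ and prove $\Gamma(s+1)\geq (s/e)^s$ for all real $s>0$. Using the integral representation $\Gamma(s+1)=\int_0^\infty t^s e^{-t}\,dt$, we restrict the integral to $t\geq s$ and use that $t^s e^{-t}$ is largest at $t=s$. A cleaner route is to write
\[
\Gamma(s+1)=s^{s+1}\int_0^\infty u^s e^{-su}\,du
\]
via the substitution $t=su$. We then use $u^s e^{-su}=e^{-s}\exp\bigl(s(\log u-u+1)\bigr)$ and restrict to $u\in[1,2]$, where $\log u-u+1\geq -(u-1)^2/2\cdot$const. This route works but is messy.

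Simplest instead: by the elementary inequality $e^{x}\geq x^s/\Gamma(s+1)$, we have
\[
e^s=\sum_{j\ge 0}\frac{s^j}{j!}\geq \frac{s^s}{s!}
\]
for integer $s$. For general real $s$ we use the integral form
\[
\Gamma(s+1)e^{s}\geq\int_0^\infty t^s e^{-(t-s)}\mathbf 1_{t\geq s}\,dt\geq s^s\int_s^\infty e^{-(t-s)}\,dt=s^s,
\]
which holds since $t^s\geq s^s$ for $t\geq s$. This one-line estimate gives $\Gamma(s+1)\geq (s/e)^s$ for all real $s>0$, covering both even and odd $k$.

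There is no real obstacle here. The only points needing care are quoting the correct ball-volume formula and handling half-integer arguments of $\Gamma$, and the integral estimate above takes care of the latter uniformly.
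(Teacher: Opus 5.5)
Your argument is correct and is the standard route behind the paper's citation of Matou\v{s}ek; the paper gives no proof of its own. The formula $\vol_k(B)=\pi^{k/2}R^k/\Gamma(\frac k2+1)$ together with your bound $\Gamma(s+1)\geq\int_s^\infty t^se^{-t}\,dt\geq (s/e)^s$, valid for all real $s>0$, gives exactly $(2\pi e/k)^{k/2}R^k$. In a write-up, keep only that one-line integral estimate and drop the abandoned substitution route and the integer-only series argument.
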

\begin{Lemma}\label{lem:convex-volume}
Let $k,M\geq1$ be integers, and let $A\subseteq[0,1]^k$ be a nonempty
finite set with $|A|\leq M$. For every integer $t\geq1$,
\begin{equation}\label{eq:convex-volume}
\vol_k(\co(A))
\leq
M^t\left(\frac{C_0}{\sqrt t}\right)^k,
\qquad
C_0=\sqrt{\frac{\pi e}{2}}.
\end{equation}
\end{Lemma}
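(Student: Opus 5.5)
We use Maurey's empirical method: every point of $\co(A)$ lies close to an average of $t$ points of $A$ (repetitions allowed). There are at most $M^t$ such averages, so $\co(A)$ is covered by at most $M^t$ small balls, and Lemma~\ref{lem:ball-volume} bounds their total volume.

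\emph{Step 1 (probabilistic approximation).} Fix $x\in\co(A)$ and write $x=\sum_{a\in A}\lambda_a a$ with $\lambda_a\geq0$ and $\sum\lambda_a=1$. Let $Y_1,\dots,Y_t$ be i.i.d.\ random vectors taking the value $a$ with probability $\lambda_a$, so that $\mathbb E Y_j=x$. By independence,
\[
\mathbb E\Bigl\|\tfrac1t\sum_{j=1}^t Y_j-x\Bigr\|^2=\tfrac1t\,\mathbb E\|Y_1-x\|^2 .
\]
The sharp constant comes from bounding the variance coordinatewise. Since $Y_1$ takes values in $[0,1]^k$, each coordinate is a $[0,1]$-valued random variable, so its variance is at most $1/4$. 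Hence $\mathbb E\|Y_1-x\|^2\leq k/4$. Consequently there is a realization, i.e.\ a multiset $\{a_1,\dots,a_t\}\subseteq A$, with
\[
\Bigl\|\tfrac1t\sum_j a_j-x\Bigr\|\leq \sqrt{k/(4t)}=\tfrac12\sqrt{k/t}.
\]

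\emph{Step 2 (covering).} Let $S$ be the set of all averages $\frac1t\sum_{j=1}^t a_j$ with $a_j\in A$. Counting ordered tuples, $|S|\leq M^t$. By Step 1, $\co(A)$ is covered by the closed balls of radius $R=\frac12\sqrt{k/t}$ centered at the points of $S$.

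\emph{Step 3 (volume).} Lemma~\ref{lem:ball-volume} bounds the volume of each ball by
\[
\Bigl(\tfrac{2\pi e}{k}\Bigr)^{k/2}\Bigl(\tfrac{k}{4t}\Bigr)^{k/2}=\Bigl(\tfrac{\pi e}{2t}\Bigr)^{k/2}=\Bigl(\tfrac{C_0}{\sqrt t}\Bigr)^k .
\]
Summing over at most $M^t$ balls gives \eqref{eq:convex-volume}.

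The only delicate point is getting the constant exactly $C_0=\sqrt{\pi e/2}$. This requires the coordinatewise variance bound $1/4$. The cruder estimate $\|Y_1-x\|^2\leq k$, which uses only the diameter of the cube, would lose a factor of $2$ in $C_0$. The rest is routine: the existence of a good realization follows from the expectation bound, and closedness of the balls is irrelevant for measure.
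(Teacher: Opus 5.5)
Your proposal is correct and follows the paper's proof essentially step for step. Both use Maurey's empirical method with the coordinatewise variance bound $1/4$, cover $\co(A)$ by at most $M^t$ balls of radius $\sqrt{k/(4t)}$ centered at $t$-fold averages, and bound each ball's volume with Lemma~\ref{lem:ball-volume}.
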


\begin{proof}
Fix $z\in\co(A)$ and write
\[
z=\sum_{a\in A}\lambda_a a,
\qquad
\lambda_a\geq0,
\qquad
\sum_{a\in A}\lambda_a=1.
\]
Let $V^{(1)},\ldots,V^{(t)}$ be independent random vectors taking values in
$A$, each with distribution $\mathbb P(V^{(\ell)}=a)=\lambda_a$ for $a\in A$,
and set \[Y=\frac1t\sum_{\ell=1}^tV^{(\ell)}.\] Then $\mathbb E [Y]=z$. Since
each coordinate $V_j^{(\ell)}$ takes values in $[0,1]$, we have
$\operatorname{Var}(V_j^{(\ell)})\leq 1/4$. Using independence,
\[
\mathbb E\|Y-z\|_2^2
=
\sum_{j=1}^k\operatorname{Var}(Y_j)
=
\frac1{t^2}\sum_{j=1}^k\sum_{\ell=1}^t
\operatorname{Var}(V_j^{(\ell)})
\leq
\frac{k}{4t}.
\]
Hence for some $v_1,\ldots,v_t\in A$,
\[
\left\|
z-\frac{v_1+\cdots+v_t}{t}
\right\|_2
\leq
\sqrt{\frac{k}{4t}}.
\]
There are at most $|A|^t\leq M^t$ such averages. So $\co(A)$ is covered
by at most $M^t$ Euclidean balls of radius $R=\sqrt{\frac{k}{4t}}$.
By Lemma~\ref{lem:ball-volume}, each such ball has volume at most
\[
\left(\frac{2\pi e}{k}\right)^{k/2}
\left(\frac{k}{4t}\right)^{k/2}
=
\left(\frac{\pi e}{2t}\right)^{k/2}
=
\left(\frac{C_0}{\sqrt t}\right)^k.
\]
Therefore
\[
\vol_k(\co(A))
\leq
M^t\left(\frac{C_0}{\sqrt t}\right)^k.
\]
\end{proof}

\section{The main estimate}\label{sec:finite-cover-comparison}

\subsection{Maps associated to minimal subcovers}\label{sec:minimal-subcover}

Throughout this section we keep the notation of
Theorem~\ref{thm:finite-cover-inequality}: $n\geq1$,
$X$ is a nonempty compact metrizable space,
$\{\cU_i\}_{i=0}^{n-1}$ is a family of finite open covers of $X$, and
$\cW=\bigvee_{i=0}^{n-1}\cU_i$. We recall the following notion from~\cite{Lindenstrauss-Weiss}: for a metrizable compact space $Y$, a
continuous map $f:X\to Y$ is called $\cU$-compatible if there is a finite
open cover $\widetilde{\cU}$ of $f(X)$ such that $f^{-1}(\widetilde{\cU})\succ\cU.$

\begin{Definition}
Let $I$ be a finite set, $E\subseteq I$, and
$\varepsilon=(\varepsilon_i)_{i\in E}\in\{0,1\}^E$. The corresponding face
of $[0,1]^I$ is
\[
F(E,\varepsilon)
=
\left\{
y\in[0,1]^I:
y_i=\varepsilon_i \text{ for every }i\in E
\right\}.
\]
Its dimension is $\dim F(E,\varepsilon)=|I|-|E|$. For $0\leq\ell\leq |I|$, the $\ell$-skeleton of $[0,1]^I$ is the union of
all faces of dimension at most $\ell$.
\end{Definition}

\begin{Proposition}[Lindenstrauss-Weiss~{\cite[Proposition~2.4]{Lindenstrauss-Weiss}}]
\label{prop:compatible-map}
Let $\cU$ be a finite open cover of $X$ and let $k\geq0$ be an integer. Then
$\mathcal D(\cU)\leq k$ if and only if there exists a polyhedron $P$ with
$\dim P\leq k$ and a continuous $\cU$-compatible map $f:X\to P$.
\end{Proposition}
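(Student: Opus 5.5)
The plan is to prove the two implications separately. In one direction a refinement of small order is turned into a map to its nerve. In the other, a map to a low-dimensional polyhedron is pulled back through a fine subdivision, using the open-star cover.

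\emph{($\Rightarrow$)} Suppose $\mathcal D(\cU)\leq k$, and fix a finite open cover $\gamma\succ\cU$ with $\operatorname{ord}(\gamma)\leq k$.
\begin{enumerate}[label=\textup{(\roman*)}]
\item Discard the empty members of $\gamma$. Since $X$ is compact metrizable, choose a partition of unity $\{\varphi_V\}_{V\in\gamma}$ subordinate to $\gamma$, so that $\{\varphi_V>0\}\subseteq V$.
\item Let $P$ be the geometric nerve of $\gamma$, realized in $\R^{\gamma}$. Its vertices are the basis vectors $e_V$, and a face spans $\{e_V:V\in\sigma\}$ whenever $\bigcap_{V\in\sigma}V\neq\emptyset$. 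Since $\operatorname{ord}(\gamma)\leq k$, at most $k+1$ members of $\gamma$ meet at any point, so $\dim P\leq k$.
\item Define $f(x)=\sum_V\varphi_V(x)e_V$. This is continuous, and it lands in $P$ because the $V$ with $\varphi_V(x)>0$ all contain $x$.
\item For compatibility, take $\widetilde{\cU}$ to be the traces on $f(X)$ of the open stars $\operatorname{st}(e_V)=\{y\in P:y_V>0\}$. These cover $f(X)$, since the coordinates of each point sum to $1$. Moreover $f^{-1}(\operatorname{st}(e_V))=\{\varphi_V>0\}\subseteq V$, which lies in some member of $\cU$. Hence $f^{-1}(\widetilde{\cU})\succ\cU$.
\end{enumerate}

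\emph{($\Leftarrow$)} Let $f\colon X\to P$ be $\cU$-compatible with $\dim P\leq k$, witnessed by a finite open cover $\widetilde{\cU}$ of $f(X)$.
\begin{enumerate}[label=\textup{(\roman*)}]
\item Regard $P$ as a finite simplicial complex in some $\R^q$ with the Euclidean metric. The set $f(X)$ is compact, so $\widetilde{\cU}$ has a Lebesgue number $\delta>0$ on $f(X)$.
\item Replace the triangulation of $P$ by an iterated barycentric subdivision of mesh $<\delta/2$. Then every open vertex star $\operatorname{st}(v)$ has diameter $<\delta$. Consequently, whenever $\operatorname{st}(v)\cap f(X)\neq\emptyset$, that set lies in a single member of $\widetilde{\cU}$.
\item Set $\gamma=\{f^{-1}(\operatorname{st}(v))\}$, keeping only the nonempty sets. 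This is a finite open cover of $X$, because the open stars cover $P$.
\item It refines $f^{-1}(\widetilde{\cU})$, and hence refines $\cU$.
\item Its order is at most $k$. A point $y\in P$ lies in $\operatorname{st}(v)$ exactly when $v$ is a vertex of the carrier simplex of $y$, and that simplex has at most $\dim P+1\leq k+1$ vertices.
\end{enumerate}
Therefore $\mathcal D(\cU)\leq\operatorname{ord}(\gamma)\leq k$.

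The only genuinely technical point is the mesh control in step (ii) of the ($\Leftarrow$) direction. We use the standard fact that one barycentric subdivision of a complex of dimension $k$ multiplies the mesh by at most $k/(k+1)$. Iterating therefore makes the mesh, and hence the diameters of the open stars, arbitrarily small. The remaining verifications are routine point-set topology.
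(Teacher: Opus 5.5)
Your proof is correct. The paper does not prove this proposition itself; it quotes it from Lindenstrauss--Weiss, so the comparison below is with the standard argument rather than with anything in the paper.

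Your ($\Rightarrow$) direction is the usual nerve argument. A partition of unity subordinate to a refinement $\gamma$ of order at most $k$ maps $X$ into the nerve of $\gamma$, which has dimension at most $k$. The open vertex stars then pull back into members of $\gamma$, giving compatibility.

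For ($\Leftarrow$), the quickest route refines $\widetilde{\cU}$ inside $f(X)$ to an open cover of order at most $k$ and pulls that back by $f$. This relies on the dimension-theoretic fact that a compact subset of a polyhedron of dimension at most $k$ has covering dimension at most $k$. You instead build the low-order refinement explicitly, via a Lebesgue number of $\widetilde{\cU}$ on $f(X)$ and the open stars of a sufficiently fine iterated barycentric subdivision. That construction is essentially the standard proof of the upper bound on the covering dimension of a polyhedron and its compact subsets. So your version is self-contained and costs only the mesh estimate. Your order count is also sound when distinct vertices have the same preimage, since that only lowers the count.
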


Let $\cU_i=\{U_i^0,\ldots,U_i^{m_i-1}\}$ for $0\leq i<n$, and let
\[ \Lambda=\{(i,j):0\leq i<n,\ 0\leq j<m_i\} .\] For
$a\in\prod_i\{0,\ldots,m_i-1\}$, write $W_a=\bigcap_iU_i^{a_i}$, and define $v^a\in\{0,1\}^{\Lambda}$ by
\[
v^a_{ij}
=
\begin{cases}
1,& j=a_i,\\
0,& j\neq a_i.
\end{cases}
\]
So $v^a$ is the indicator vector of the choice $a$, with exactly one
coordinate equal to $1$ in each block $\{i\}\times\{0,\ldots,m_i-1\}$.

\begin{Lemma}\label{lem:minimal-subcover-map-general}
In the above notation, let $M=\mathcal N(\cW)$. There exist a set
$\mathcal A\subseteq\prod_i\{0,\ldots,m_i-1\}$ with $|\mathcal A|=M$ such
that $\{ W_a : a\in\mathcal A$ \} is a cover of $X$, and a continuous map
\[
f:X\longrightarrow
K=\co\{v^a:a\in\mathcal A\}\subseteq[0,1]^\Lambda
\]
such that, for every $x\in X$ and every $(i,j)\in\Lambda$,
\[
f_{ij}(x)>0\Longrightarrow x\in U_i^j.
\]
Moreover, $\sum_{j=0}^{m_i-1}y_{ij}=1$ for every $y\in K$ and every
$0\leq i<n$.
\end{Lemma}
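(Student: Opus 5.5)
The plan is to take a minimal subcover of $\cW$ and build $f$ from a partition of unity subordinate to it. Every element of $\cW$ has the form $W_a$ for some $a\in\prod_i\{0,\ldots,m_i-1\}$, so a minimal subcover of $\cW$ has exactly $M=\mathcal N(\cW)$ elements. Choose $\mathcal A$ so that this subcover is $\{W_a:a\in\mathcal A\}$ and $a\mapsto W_a$ is injective on $\mathcal A$. If two different indices happen to give the same set, keep only one of them; the cover is still a cover of cardinality $M$, and the count cannot drop below $M$ by minimality. This gives $|\mathcal A|=M$.

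Since $X$ is compact metrizable, hence normal, there is a continuous partition of unity $\{\phi_a\}_{a\in\mathcal A}$ subordinate to this cover. That is, $\phi_a\geq0$, $\sum_a\phi_a\equiv1$, and $\phi_a(x)>0$ implies $x\in W_a$. Concretely, take $\phi_a=\psi_a/\sum_b\psi_b$ with $\psi_a(x)=\operatorname{dist}(x,X\setminus W_a)$. The denominator never vanishes because the $W_a$ cover $X$. If some $W_a=X$, then $M=1$ and we simply put $\phi_a\equiv1$.

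Define
\[
f(x)=\sum_{a\in\mathcal A}\phi_a(x)\,v^a .
\]
This is continuous, and it lies in $K=\co\{v^a:a\in\mathcal A\}$ because it is a convex combination of the $v^a$. Coordinatewise, $f_{ij}(x)=\sum_{a\in\mathcal A:\,a_i=j}\phi_a(x)$. If $f_{ij}(x)>0$, then some $a$ with $a_i=j$ has $\phi_a(x)>0$. Hence $x\in W_a\subseteq U_i^{a_i}=U_i^j$, which is the required implication.

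For the final claim, every vertex $v^a$ has exactly one coordinate equal to $1$ in each block $\{i\}\times\{0,\ldots,m_i-1\}$, so $\sum_j v^a_{ij}=1$. This linear identity is preserved under convex combinations, so it holds for every $y\in K$.

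There is no real obstacle here. The only points needing care are that distinct $a$ may give the same set $W_a$ (handled by the injectivity choice) and the degenerate case of a one-element subcover (handled separately above).
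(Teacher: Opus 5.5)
Your proposal is correct and follows the paper's proof essentially verbatim: you choose $\mathcal A$ indexing a minimal subcover, take a partition of unity subordinate to $\{W_a\}_{a\in\mathcal A}$, set $f=\sum_a\varphi_a v^a$, and get both the support implication and the block-sum identity exactly as the paper does. Your added care about repeated sets $W_a$ and the explicit distance-function partition of unity fills in details the paper leaves implicit.
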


\begin{proof}
Since $\cW$ consists of the sets $W_a$ and $M=\mathcal N(\cW)$, we may
choose $\mathcal A\subseteq\prod_i\{0,\ldots,m_i-1\}$ with $|\mathcal A|=M$
such that $\{W_a:a\in\mathcal A\}$ covers $X$. Let
$(\varphi_a)_{a\in\mathcal A}$ be a continuous partition of unity
subordinate to this cover, and define
\[
f(x)=\sum_{a\in\mathcal A}\varphi_a(x)v^a.
\]

If $f_{ij}(x)>0$, then some $a\in\mathcal A$ with
$\varphi_a(x)>0$ satisfies $a_i=j$, and hence $x\in W_a\subseteq U_i^j$. Finally, each $v^a$ has exactly one coordinate equal to $1$ in every block
$i$, so $\sum_{j=0}^{m_i-1}y_{ij}=1$ for every $y\in K$ and every
$0\leq i<n$.
\end{proof}

\subsection{The compression argument}\label{sec:compression}

For a finite coordinate set $I$ and $J\subseteq I$, let
$\pi_J\colon[0,1]^I\to[0,1]^J$ denote the coordinate projection, and for $q\in[0,1]^I$ write
$q_J=\pi_J(q)$.

The following is a form of the compression argument used in the proof of
\cite[Theorem~4.2]{Lindenstrauss-Weiss}.
See also \cite[Lemmas~6.4-6.5]{Li-Sofic-Mdim}.

\begin{Lemma}\label{lem:low-dimensional-faces}
Let $I$ be a finite set, let $K\subseteq[0,1]^I$ be nonempty and compact,
and let $1\leq k\leq|I|$. Suppose that $q\in(0,1)^I$ satisfies
\[
q_J\notin\pi_JK
\qquad
\text{for every }J\subseteq I\text{ with }|J|=k.
\]
Then there is a continuous map $r\colon K\to[0,1]^I$ whose image lies in the $(k-1)$-skeleton of $[0,1]^I$, which fixes every
coordinate already equal to $0$ or $1$, and such that, for every $y\in K$
and $i\in I$,
\[
y_i<q_i\Longrightarrow r_i(y)<q_i,
\quad
y_i=q_i\Longrightarrow r_i(y)=q_i,
\quad
y_i>q_i\Longrightarrow r_i(y)>q_i.
\]
\end{Lemma}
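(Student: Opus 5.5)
The plan is the classical radial-projection (compression) argument of Lindenstrauss and Weiss, carried out face by face downward through the dimensions of the cube. I will build maps $r_\ell$, for $\ell=|I|,|I|-1,\ldots,k-1$, where $r_\ell$ is defined on $K$, takes values in the $\ell$-skeleton of $[0,1]^I$, and satisfies the three sign conditions relative to $q$. In addition, $r_\ell$ will fix each coordinate of $y$ that already equals $0$ or $1$. The start is $r_{|I|}=\mathrm{id}_K$, and the required map is $r=r_{k-1}$. The invariant that makes each step possible is a statement about images:
\[
r_\ell(y)_J=q_J,\ y\in K,\ J\subseteq I\ \Longrightarrow\ y_J=q_J .
\]
This holds because $r_\ell$ keeps the relations $y_i<q_i$, $y_i=q_i$, $y_i>q_i$ coordinatewise. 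From the invariant it follows that for $\ell\ge k$ no point of $r_\ell(K)$ lies in a face $F(E,\varepsilon)$ of dimension $\ell$ while agreeing with $q$ on all $\ell$ of its free coordinates $J=I\setminus E$. Indeed, such a point would give some $y\in K$ with $y_J=q_J$, hence $q_J\in\pi_J K$, even if $|J|>k$: shrink $J$ to a subset of size exactly $k$ and use the hypothesis. So within each $\ell$-face $F$, the center point $c_F$ (equal to $\varepsilon$ on $E$ and to $q$ on $J$) lies in the relative interior, since $q\in(0,1)^I$, and it misses $r_\ell(K)$.

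For the inductive step, on each $\ell$-face $F$ with $\ell\ge k$, let $\rho_F\colon F\setminus\{c_F\}\to\partial F$ be radial projection from $c_F$. This map is continuous and is the identity on $\partial F$. Define $r_{\ell-1}=\rho_F\circ r_\ell$ on $r_\ell^{-1}(F)$. Since the maps $\rho_F$ are the identity on the lower-dimensional skeleton, they agree on intersections of faces. So $r_{\ell-1}$ is well defined and continuous by the pasting lemma: the sets $r_\ell^{-1}(F)$ are finitely many closed sets covering $K$.

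The main point to verify is that radial projection from $c_F$ preserves the sign pattern relative to $q$ and fixes coordinates equal to $0$ or $1$. Coordinates in $E$ are constant on $F$, so they are untouched. For $i\in J$, we have $\rho_F(z)_i-q_i=\lambda\,(z_i-q_i)$ with a single scalar $\lambda\ge1$, because $\rho_F(z)=c_F+\lambda(z-c_F)$ and $(c_F)_i=q_i$. Hence strict inequalities and equalities with $q_i$ are preserved. A coordinate $z_i\in\{0,1\}$ with $i\in J$ means $z$ already lies in $\partial F$, where $\lambda=1$. Composing these steps gives the sign conditions for $r_{\ell-1}$ relative to $y$, and the argument continues down to $\ell=k$, producing $r_{k-1}$ with image in the $(k-1)$-skeleton.

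The main obstacle is the invariant linking points of $r_\ell(K)$ back to $K$ through $q$. It follows from the equality-preservation clause, since $r_\ell(y)_i=q_i$ forces $y_i=q_i$; this is exactly why the statement insists on the trichotomy and not merely on landing in the skeleton.
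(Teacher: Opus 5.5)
Your proposal is correct and is essentially the paper's proof: successive radial projections from the centers $c_F$ of the faces of dimension $s=|I|,\ldots,k$, pasted along common boundaries, with the sign pattern relative to $q$ preserved because $(r_F(y))_i-q_i=\lambda(y)(y_i-q_i)$ with $\lambda(y)\geq1$. The only cosmetic difference is in how the avoidance of $c_F$ is tracked. The paper uses the invariant set $\Omega=\{y:\#\{i:y_i=q_i\}<k\}$, which contains $K$ and is mapped into itself by each projection. You instead trace points of $r_\ell(K)$ back to $K$ through the equality-preservation clause, which amounts to the same argument.
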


\begin{proof}
Set $\Omega=\{y\in[0,1]^I:\#\{i:y_i=q_i\}<k\}$. The hypothesis implies $K\subseteq\Omega$.
Let $F=F(E,\varepsilon)$ be a face of dimension $s\geq k$, and let
$c_F\in F$ be given by
\[
(c_F)_i=
\begin{cases}
\varepsilon_i,& i\in E,\\
q_i,& i\notin E.
\end{cases}
\]
Then $c_F\notin\Omega$, since it agrees with $q$ in all $s\geq k$
coordinates not in $E$.

For each $y\in F\setminus\{c_F\}$, let $r_F(y)$ be the unique point of
$\partial F$ lying on the ray from $c_F$ through $y$. For $i\notin E$,
\[
(r_F(y))_i
=
q_i+\lambda(y)(y_i-q_i),
\qquad
\lambda(y)\geq1,
\]
while the coordinates in $E$ are unchanged. Hence $r_F$ fixes
$\partial F$ and satisfies
\[
\operatorname{sgn}\bigl((r_F(y))_i-q_i\bigr)
=
\operatorname{sgn}(y_i-q_i)
\qquad
(i\in I).
\]
In particular, $r_F(F\cap\Omega)\subseteq\partial F\cap\Omega$.
Now fix $s\geq k$. If two distinct $s$-dimensional faces $F_1$ and $F_2$
intersect, then every point of $F_1\cap F_2$ lies in both boundaries, so
$r_{F_1}(y)=y=r_{F_2}(y)$ for any $y$ in the mutual intersection $F_1\cap F_2\cap\Omega$. Therefore the maps $r_F$, over the $s$-dimensional faces $F$, paste to a
continuous map from the $s$-skeleton intersected with $\Omega$ into the
$(s-1)$-skeleton intersected with $\Omega$. Composing these maps for
$s=|I|,|I|-1,\ldots,k$ and restricting to $K$ gives the required map. If $y_i\in\{0,1\}$, then at each step either $i$ is
a fixed coordinate of the face containing the current point, or the current
point lies in the boundary of that face and is unchanged. Hence that
coordinate remains equal to $y_i$ throughout.
\end{proof}

We now apply the compression argument to the block encoding of
Lemma~\ref{lem:minimal-subcover-map-general}.

\begin{Lemma}\label{lem:coordinate-projections-general}
Let $K$ be as in Lemma~\ref{lem:minimal-subcover-map-general}, set
$m=\max_i m_i$, and assume that $d=\mathcal D(\cW)\geq1$. Then, for every
$q\in(0,1/m)^\Lambda$, there exists $J\subseteq\Lambda$ with $|J|=d$ such
that $q_J\in\pi_JK$.
\end{Lemma}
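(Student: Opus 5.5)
Argue by contradiction. Suppose that for some $q\in(0,1/m)^\Lambda$ we have $q_J\notin\pi_JK$ for every $J\subseteq\Lambda$ with $|J|=d$. We will build a $\cW$-compatible map from $X$ into a polyhedron of dimension at most $d-1$; by Proposition~\ref{prop:compatible-map} this gives $\mathcal D(\cW)\leq d-1$, contradicting $d=\mathcal D(\cW)$. Note that $d\leq|\Lambda|$, since $\mathcal D(\cW)\leq\mathcal N(\cW)-1$ and also $\mathcal D(\cW)\leq\sum_i\mathcal D(\cU_i)\leq\sum_i(m_i-1)$. So Lemma~\ref{lem:low-dimensional-faces} applies with $I=\Lambda$ and $k=d$. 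It gives a continuous map $r\colon K\to[0,1]^\Lambda$ with image in the $(d-1)$-skeleton $P$, which is a polyhedron of dimension $\leq d-1$, and $r$ preserves the sign of $y_{ij}-q_{ij}$ in every coordinate. Set $g=r\circ f\colon X\to P$.

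Next we show $g$ is $\cW$-compatible. For $(i,j)\in\Lambda$ let $O_{ij}=\{p\in P: p_{ij}>q_{ij}\}$; these sets are open in $P$.

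\emph{Covering property.} For each $x\in X$ and each block $i$, we have $\sum_j f_{ij}(x)=1$ by Lemma~\ref{lem:minimal-subcover-map-general}, and $q_{ij}<1/m\leq 1/m_i$. Hence some $j$ has $f_{ij}(x)\geq 1/m_i>q_{ij}$. By sign preservation, $g_{ij}(x)>q_{ij}$, so $g(x)\in O_{ij}$. Choosing such a $j=a_i(x)$ for each block, $g(x)$ lies in the open set $\bigcap_i O_{i,a_i}$.

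\emph{Pullback property.} If $g_{ij}(x)>q_{ij}>0$, then by sign preservation $f_{ij}(x)>q_{ij}>0$, and Lemma~\ref{lem:minimal-subcover-map-general} gives $x\in U_i^j$. So $g^{-1}(O_{ij})\subseteq U_i^j$.

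Now take $\widetilde\cW=\{\bigcap_i O_{i,a_i}\cap g(X): a\in\prod_i\{0,\dots,m_i-1\}\}$, keeping only the nonempty members. This is a finite open cover of $g(X)$, and $g^{-1}\bigl(\bigcap_i O_{i,a_i}\bigr)\subseteq W_a$, so $g^{-1}(\widetilde\cW)\succ\cW$. Therefore $g$ is $\cW$-compatible into a polyhedron of dimension $\leq d-1$, and Proposition~\ref{prop:compatible-map} yields the contradiction.

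The main point to check is the choice of threshold $q<1/m$. It is exactly what makes the sets $O_{ij}$ cover after compression. The positivity $q>0$ is what lets the pullback land in $U_i^j$. Sign preservation from Lemma~\ref{lem:low-dimensional-faces} carries both facts through $r$.
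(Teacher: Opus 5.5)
Your proof is correct and follows the paper's argument: a contradiction via Lemma~\ref{lem:low-dimensional-faces}, then $g=r\circ f$ into the $(d-1)$-skeleton, then a $\cW$-compatible cover of $g(X)$ indexed by choices $a$. The only difference is cosmetic: you define the open sets by $z_{ij}>q_{ij}$ and pull back using two-sided sign preservation, whereas the paper uses $z_{ij}>0$ and the fact that $r$ fixes coordinates equal to $0$.
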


\begin{proof}
By subadditivity,
\[
d=\mathcal D(\cW)
\leq\sum_{i=0}^{n-1}\mathcal D(\cU_i)
\leq\sum_{i=0}^{n-1}(m_i-1)
\leq N=|\Lambda|.
\]
Suppose to the contrary that there is some $q\in(0,1/m)^\Lambda$ such that
$q_J\notin\pi_JK$ for every $J\subseteq\Lambda$ with $|J|=d$.
Lemma~\ref{lem:low-dimensional-faces}, applied with $I=\Lambda$ and $k=d$,
gives a continuous map $r\colon K\to[0,1]^\Lambda$ whose image lies in the
$(d-1)$\nobreakdash-skeleton and which fixes every coordinate equal to $0$
or~$1$. Set $g=r\circ f$, where $f$ is the map of Lemma~\ref{lem:minimal-subcover-map-general}.
For every $x\in X$ and every $0\leq i<n$, some $j$ satisfies
\[
f_{ij}(x)\geq\frac1{m_i}\geq\frac1m>q_{ij}.
\]
Lemma~\ref{lem:low-dimensional-faces} therefore gives
$g_{ij}(x)>q_{ij}>0$. Thus for every $x\in X$ and every $0\leq i<n$ there exists
$j\in\{0,\ldots,m_i-1\}$ such that $g_{ij}(x)>0$.
Since $r$ fixes every coordinate equal to $0$,
\[
g_{ij}(x)>0
\Longrightarrow
f_{ij}(x)>0
\Longrightarrow
x\in U_i^j.
\]
For $a\in\prod_i\{0,\ldots,m_i-1\}$, let
\[
V_a
=
\{z\in g(X):z_{i,a_i}>0\text{ for every }0\leq i<n\}.
\]
The sets $V_a$ are open in $g(X)$ and cover $g(X)$, and each preimage
$g^{-1}(V_a)$ is contained in the element $\bigcap_iU_i^{a_i}$ of $\cW$.
Hence $g$ is $\cW$-compatible. Since its image lies in the
$(d-1)$\nobreakdash-skeleton, Proposition~\ref{prop:compatible-map} gives
$\mathcal D(\cW)\leq d-1$, a contradiction.
\end{proof}

\begin{Proposition}\label{prop:projection-volume-general}
Let $K$, $d$ and $m$ be as in
Lemma~\ref{lem:coordinate-projections-general}. Then
\[
1
\leq
m^d
\sum_{\substack{J\subseteq\Lambda\\|J|=d}}
\vol_d(\pi_JK).
\]
\end{Proposition}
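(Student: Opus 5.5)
The plan is a covering-by-projections volume argument. By Lemma~\ref{lem:coordinate-projections-general}, the cube $Q=(0,1/m)^\Lambda$ satisfies
\[
Q\subseteq\bigcup_{\substack{J\subseteq\Lambda\\|J|=d}}\pi_J^{-1}(\pi_JK).
\]
Indeed, every $q\in Q$ has some $J$ with $|J|=d$ and $q_J\in\pi_JK$. I will compare the volume of $Q$ with the volumes of the pieces $Q\cap\pi_J^{-1}(\pi_JK)$.

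First, $\vol_{|\Lambda|}(Q)=m^{-|\Lambda|}$. Next, fix $J$ with $|J|=d$. The set $Q\cap\pi_J^{-1}(\pi_JK)$ is contained in the product
\[
\bigl(\pi_JK\cap(0,1/m)^J\bigr)\times(0,1/m)^{\Lambda\setminus J}.
\]
By Fubini its measure is at most $\vol_d(\pi_JK)\,m^{-(|\Lambda|-d)}$. These sets are measurable, since $K$ is compact and hence $\pi_JK$ is compact. Subadditivity of Lebesgue measure then gives
\[
m^{-|\Lambda|}\le\sum_{|J|=d}\vol_d(\pi_JK)\,m^{-(|\Lambda|-d)}.
\]
Multiplying through by $m^{|\Lambda|}$ yields exactly $1\le m^d\sum_{|J|=d}\vol_d(\pi_JK)$.

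There is no real obstacle; the substantive work is already in Lemma~\ref{lem:coordinate-projections-general}. Two points need checking. The lemma requires $d\ge1$ and $d\le|\Lambda|$, so that the $d$-subsets make sense; both are part of that lemma's hypotheses and are established in its proof. The rest is bookkeeping: use the slightly cruder bound $\vol_d(\pi_JK\cap(0,1/m)^J)\le\vol_d(\pi_JK)$, and treat the open cube measure-theoretically, where boundaries are irrelevant.
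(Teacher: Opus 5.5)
Your proposal is correct and follows essentially the same argument as the paper: cover $Q=(0,1/m)^\Lambda$ by the sets $E_J=\{q\in Q:q_J\in\pi_JK\}$ using Lemma~\ref{lem:coordinate-projections-general}, bound each by $m^{-(|\Lambda|-d)}\vol_d(\pi_JK)$ via the product structure, and compare with $m^{-|\Lambda|}$ by subadditivity. Your remarks on measurability and on $1\le d\le|\Lambda|$ are accurate and simply make explicit what the paper leaves implicit.
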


\begin{proof}
Since $|\Lambda|=N$, set $Q=(0,1/m)^\Lambda$, and for $|J|=d$ let
\[E_J=\{q\in Q:q_J\in\pi_JK\}.\] Lemma~\ref{lem:coordinate-projections-general}
gives $Q\subseteq\bigcup_{|J|=d}E_J$. Moreover, since the coordinates outside $J$ are unrestricted,
\[
\vol_N(E_J)
=
m^{-(N-d)}
\vol_d\bigl(\pi_JK\cap(0,1/m)^J\bigr)
\leq
m^{-(N-d)}\vol_d(\pi_JK).
\]
Since $\vol_N(Q)=m^{-N}$, subadditivity gives
\[
m^{-N}
\leq
m^{-(N-d)}
\sum_{|J|=d}\vol_d(\pi_JK),
\]
and multiplying by $m^N$ gives the claim.
\end{proof}

\subsection{Proof of the main theorem}

\begin{proof}[Proof of Theorem~\ref{thm:finite-cover-inequality}]
Set
\[
d=\mathcal D(\cW),
\qquad
M=\mathcal N(\cW).
\]
If $d=0$ there is nothing to prove, so assume $d\geq1$.

Let $K$ be given by Lemma~\ref{lem:minimal-subcover-map-general}.
For every $J\subseteq\Lambda$ with $|J|=d$, the set $\pi_JK$ is the
convex hull of at most $M$ points of $[0,1]^J$. Hence
Lemma~\ref{lem:convex-volume} gives, for every integer $t\geq1$,
\[
\vol_d(\pi_JK)
\leq
M^t\left(\frac{C_0}{\sqrt t}\right)^d.
\]
There are exactly $\binom Nd$ subsets $J\subseteq\Lambda$ of cardinality
$d$, so summing this bound over all such $J$ and using
Proposition~\ref{prop:projection-volume-general} gives
\[
1
\leq
m^d\sum_{|J|=d}\vol_d(\pi_JK)
\leq
\binom Nd M^t
\left(\frac{mC_0}{\sqrt t}\right)^d.
\]

Now take
\[
t=
\left\lceil
\left(\frac{2eC_0mN}{d}\right)^2
\right\rceil .
\]
Using $\binom Nd\leq(eN/d)^d$ and
\[
\frac{eN}{d}\frac{mC_0}{\sqrt t}
\leq \frac12,
\]
we obtain
\[
1\leq M^t2^{-d},
\qquad
\log M\geq\frac{d\log2}{t}.
\]

Moreover, since $C_0^2=\pi e/2$,
\[
t
\leq
2\pi e^3\frac{m^2N^2}{d^2}+1.
\]
As $d\leq N$ and $m\geq1$, we have $m^2N^2/d^2\geq1$, and hence
\[
t
\leq
(2\pi e^3+1)\frac{m^2N^2}{d^2}.
\]
Therefore
\[
\log M
\geq
\frac{\log2}{2\pi e^3+1}
\frac{d^3}{m^2N^2},
\]
so
\[
d^3
\leq
\frac{2\pi e^3+1}{\log2}\,
m^2N^2\log M.
\]

Since $N\leq mn$, we conclude
\[
\mathcal D(\cW)^3
\leq
\frac{2\pi e^3+1}{\log2}\,
m^2N^2\log\mathcal N(\cW)
\leq
\frac{2\pi e^3+1}{\log2}\,
m^4n^2\log\mathcal N(\cW).
\]
\end{proof}
\section{Dynamical consequences}\label{sec:dynamical-consequences}

\subsection{Classical and amenable actions}

\begin{proof}[Proof of Corollary~\ref{cor:fixed-cover}]
Apply Theorem~\ref{thm:finite-cover-inequality} to
$\cU_i=T^{-i}\cU$, $0\leq i<n$, so that $m\leq|\cU|$; divide by $n^3$ and let
$n\to\infty$.
For amenable groups, argue as in Remark~\ref{rem:amenable}.
\end{proof}

\begin{proof}[Proof of Corollary~\ref{cor:pairs}]
Let $(x,y)$ be a mean dimension pair and let $\cU$ be a standard open cover
distinguishing it. Then $\mdim(X,T,\cU)>0$, so $\htop(X,T,\cU)>0$ by
Corollary~\ref{cor:fixed-cover}. The second statement follows by the same
argument applied to all standard open covers.
\end{proof}

\begin{Remark}\label{rem:amenable}
Let a countable amenable group $G$ act continuously on $X$, and let
$(F_n)$ be a left F\o lner sequence in $G$. For every nonempty finite
$F\subseteq G$, write
\[
\cU^F=\bigvee_{g\in F}g^{-1}\cU.
\]
Applying Theorem~\ref{thm:finite-cover-inequality} to the family
$\{g^{-1}\cU:g\in F_n\}$ gives
\[
\mathcal D(\cU^{F_n})^3
\leq
\frac{2\pi e^3+1}{\log2}\,
|\cU|^4 |F_n|^2\log\mathcal N(\cU^{F_n}).
\]
Hence
\[
\left(\frac{\mathcal D(\cU^{F_n})}{|F_n|}\right)^3
\leq
\frac{2\pi e^3+1}{\log2}\,
|\cU|^4
\frac{\log\mathcal N(\cU^{F_n})}{|F_n|}.
\]
For every finite $F\subseteq G$ and every $h\in G$,
\[
\cU^{Fh}=h^{-1}\cU^F.
\]
Hence the functions
$F\mapsto\mathcal D(\cU^F)$ and
$F\mapsto\log\mathcal N(\cU^F)$
are right-translation invariant. They are also nonnegative, monotone, and
subadditive. By~\cite[Theorem~6.1]{Lindenstrauss-Weiss}, the limits
\begin{align*}
\mdim(X,G,\cU)
&:=\lim_{n\to\infty}\frac{\mathcal D(\cU^{F_n})}{|F_n|},\\
\htop(X,G,\cU)
&:=\lim_{n\to\infty}\frac{\log\mathcal N(\cU^{F_n})}{|F_n|}
\end{align*}
exist and are independent of the chosen left F\o lner sequence.
Letting $n\to\infty$ therefore yields
\[
\mdim(X,G,\cU)^3
\leq
\frac{2\pi e^3+1}{\log2}\,
|\cU|^4\htop(X,G,\cU).
\]
The proof of Corollary~\ref{cor:pairs} applies to these invariants as well.
\end{Remark}

\subsection{The sofic case}\label{sec:sofic-cover-extension}

Let a countable sofic group $G$ act continuously on a nonempty compact
metrizable space $X$, and fix a sofic approximation sequence
$\Sigma=(\sigma_i)_{i\geq1}$ for $G$. Thus each $\sigma_i$ is a map
$G\to\operatorname{Sym}(n_i)$, not necessarily a homomorphism,
$n_i\to\infty$, and for all $g,h\in G$,
\begin{align*}
\frac1{n_i}\,\#\{a:\sigma_i(gh)a=\sigma_i(g)\sigma_i(h)a\}&\to1,\\
\frac1{n_i}\,\#\{a:\sigma_i(g)a\neq\sigma_i(h)a\}&\to1
\qquad\text{if }g\neq h.
\end{align*}
Fix also a compatible metric $\rho$ on $X$.
For a nonempty finite set $F\subseteq G$ and $\delta>0$, the
\emph{model space} $\operatorname{Map}(\rho,F,\delta,\sigma_i)$ is the set of
maps $\varphi\colon\{1,\ldots,n_i\}\to X$ such that
\[
\frac1{n_i}\sum_{a=1}^{n_i}
\rho\bigl(g\varphi(a),\varphi(\sigma_i(g)a)\bigr)^2
\leq\delta^2
\qquad\text{for every }g\in F;
\]
see~\cite[Definition~2.3]{Li-Sofic-Mdim}. Identifying such maps with points
of $X^{n_i}$, the model space is a closed subset of $X^{n_i}$, hence compact.
For a finite open cover $\cU$ of $X$, let $\cU^{n_i}$ be the cover of
$X^{n_i}$ by the products $U_1\times\cdots\times U_{n_i}$ with $U_a\in\cU$,
and let $\cW_i(F,\delta)$ be its restriction to the model space.
Since the model space is closed, computing $\mathcal D(\cW_i(F,\delta))$
using refinements on the model space agrees with the ambient-refinement
convention of~\cite[Remark~2.8]{Garcia-Gutman-mdim}.
Following~\cite[Definition~2.4]{Li-Sofic-Mdim}
and~\cite[Section~2.4]{Garcia-Gutman-mdim}, the fixed-cover sofic mean dimension
and sofic entropy are
\begin{align*}
\mdim_\Sigma(X,G,\cU)
&=\inf_F\inf_{\delta>0}\limsup_{i\to\infty}
\frac{\mathcal D(\cW_i(F,\delta))}{n_i},\\
h_\Sigma(X,G,\cU)
&=\inf_F\inf_{\delta>0}\limsup_{i\to\infty}
\frac{\log\mathcal N(\cW_i(F,\delta))}{n_i},
\end{align*}
where both quotients are taken to be $-\infty$ when the model space is empty.
We use the conventions $(-\infty)^3=-\infty$ and $c(-\infty)=-\infty$
for $c>0$.
By~\cite[Lemma~2.9]{Li-Sofic-Mdim}, $\mdim_\Sigma(X,G,\cU)$ does not depend on
$\rho$. For compatible metrics $\rho,\rho'$, a nonempty finite $F\subseteq G$,
and $\delta>0$, the proof of that lemma gives $\delta'>0$ such that
\[
\operatorname{Map}(\rho,F,\delta',\sigma_i)
\subseteq
\operatorname{Map}(\rho',F,\delta,\sigma_i)
\quad\text{for every }i.
\]
Covering numbers are monotone under restriction, so taking limsups and
infima, and then interchanging the metrics, proves that
$h_\Sigma(X,G,\cU)$ is also independent of $\rho$. Sofic mean
dimension pairs, sofic entropy pairs, sofic UPMD and sofic UPE are defined as
in the introduction, with $\mdim_\Sigma$ and $h_\Sigma$ in place of $\mdim$
and $\htop$; see~\cite[Definitions~2.33-2.34, 3.3 and~4.1]{Garcia-Gutman-mdim}.

\begin{Theorem}\label{thm:sofic-fixed-cover-comparison}
Let $\cU$ be a finite open cover of $X$. Then
\[
\mdim_\Sigma(X,G,\cU)^3
\leq
\frac{2\pi e^3+1}{\log2}\,|\cU|^4\,
h_\Sigma(X,G,\cU).
\]
In particular, $h_\Sigma(X,G,\cU)=0$ implies $\mdim_\Sigma(X,G,\cU)=0$.
\end{Theorem}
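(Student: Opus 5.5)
The plan is to apply Theorem~\ref{thm:finite-cover-inequality} to the model space itself, which is a compact metrizable space, and then pass to the limit. Fix a nonempty finite $F\subseteq G$, $\delta>0$ and $i$, and write $Z=\operatorname{Map}(\rho,F,\delta,\sigma_i)\subseteq X^{n_i}$. If $Z=\emptyset$, both quotients are $-\infty$ by convention, and the inequality between them holds trivially with the stated conventions. Otherwise, for $1\le a\le n_i$ let $\mathcal V_a$ be the cover of $Z$ by the sets $\{\varphi\in Z:\varphi(a)\in U\}$, $U\in\cU$. These are finite open covers of $Z$ with $|\mathcal V_a|\le|\cU|$. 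Their join $\bigvee_{a=1}^{n_i}\mathcal V_a$ is exactly $\cW_i(F,\delta)$, the restriction of $\cU^{n_i}$ to $Z$. Some intersections may be empty; empty members do not affect $\mathcal D$ or $\mathcal N$, and the theorem as stated allows arbitrary finite families in any case.

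Theorem~\ref{thm:finite-cover-inequality}, applied with $n=n_i$ and $m\le|\cU|$, then gives
\[
\mathcal D(\cW_i(F,\delta))^3\le\frac{2\pi e^3+1}{\log2}\,|\cU|^4n_i^2\log\mathcal N(\cW_i(F,\delta)).
\]
Dividing by $n_i^3$ gives
\[
\Bigl(\frac{\mathcal D(\cW_i(F,\delta))}{n_i}\Bigr)^3\le C|\cU|^4\frac{\log\mathcal N(\cW_i(F,\delta))}{n_i},\qquad C=\frac{2\pi e^3+1}{\log2},
\]
for every $i$, including those with empty model space.

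The remaining step is the passage through $\limsup_i$ and the two infima, using monotonicity. The map $s\mapsto s^3$ is continuous and increasing on $[0,\infty)\cup\{-\infty\}$, so
\[
\bigl(\limsup_i a_i\bigr)^3=\limsup_i a_i^3\le C|\cU|^4\limsup_i b_i.
\]
Now take the infimum over $\delta$ and $F$. The left side becomes $\inf_F\inf_\delta(\limsup_i a_i)^3$, and since cubing is increasing and continuous this equals $\bigl(\inf_F\inf_\delta\limsup_i a_i\bigr)^3=\mdim_\Sigma(X,G,\cU)^3$. The right side becomes $C|\cU|^4h_\Sigma(X,G,\cU)$, because $c(-\infty)=-\infty$ is compatible with taking infima. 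This yields the inequality.

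For the ``in particular'' statement: if $h_\Sigma=0$, then $\mdim_\Sigma^3\le0$, so $\mdim_\Sigma\le0$. Since $\mdim_\Sigma\ge-\infty$, we must also rule out $\mdim_\Sigma=-\infty$. Here $\mdim_\Sigma=-\infty$ happens exactly when the model spaces are eventually empty for some $F,\delta$, and that forces $h_\Sigma=-\infty\neq0$. Hence $\mdim_\Sigma=0$.

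The main obstacle is only the bookkeeping with $-\infty$ and the interchange of cubing with the infima and limsup; the combinatorial content is entirely supplied by Theorem~\ref{thm:finite-cover-inequality}.
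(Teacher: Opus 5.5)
Your proposal is correct and follows essentially the same route as the paper: apply Theorem~\ref{thm:finite-cover-inequality} to each nonempty model space with the $n_i$ coordinate covers, divide by $n_i^3$, and pass to $\limsup_i$ and then the infima using that cubing is increasing and continuous on $[-\infty,\infty)$; then exclude $\mdim_\Sigma(X,G,\cU)=-\infty$ when $h_\Sigma(X,G,\cU)=0$, because the model spaces are then nonempty infinitely often for every $F$ and $\delta$. The only cosmetic difference is that you commute cubing with the infima (an equality), whereas the paper simply bounds $\mdim_\Sigma(X,G,\cU)^3$ by the cube of each $\limsup$ and takes the infimum on the right-hand side.
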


\begin{proof}
Write $C=(2\pi e^3+1)/\log2$. Fix $F$ and $\delta$, and let
$Z_i=\operatorname{Map}(\rho,F,\delta,\sigma_i)$. If $Z_i$ is nonempty, then
for each $1\leq a\leq n_i$ the sets $\{\varphi\in Z_i:\varphi(a)\in U\}$,
$U\in\cU$, form an open cover of $Z_i$ with at most $|\cU|$ elements, and the
join of these $n_i$ covers is $\cW_i(F,\delta)$. Applying
Theorem~\ref{thm:finite-cover-inequality} to the compact space $Z_i$ and
dividing by $n_i^3$ gives
\[
\left(\frac{\mathcal D(\cW_i(F,\delta))}{n_i}\right)^3
\leq
C|\cU|^4\,\frac{\log\mathcal N(\cW_i(F,\delta))}{n_i},
\]
and this holds trivially when $Z_i$ is empty. Since $x\mapsto x^3$ is
continuous and increasing on $[-\infty,\infty)$, taking $\limsup_{i\to\infty}$ gives
\begin{align*}
\mdim_\Sigma(X,G,\cU)^3
&\leq
\left(\limsup_{i\to\infty}\frac{\mathcal D(\cW_i(F,\delta))}{n_i}\right)^3\\
&\leq
C|\cU|^4\,\limsup_{i\to\infty}\frac{\log\mathcal N(\cW_i(F,\delta))}{n_i}
\end{align*}
for every $F$ and $\delta$. Taking the infimum over $F$ and $\delta$ on the right gives the
inequality. If $h_\Sigma(X,G,\cU)=0$, then for all $F$ and $\delta$ the model
spaces are nonempty for infinitely many $i$, so
$\mdim_\Sigma(X,G,\cU)\geq0$, and the inequality gives equality.
\end{proof}

\begin{Corollary}\label{cor:sofic-mean-dimension-pairs}
Every sofic mean dimension pair with respect to $\Sigma$ is a sofic entropy
pair with respect to $\Sigma$. In particular, every action with sofic UPMD
has sofic UPE, with respect to the same approximation.
\end{Corollary}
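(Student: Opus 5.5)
The plan is to argue exactly as in the proof of Corollary~\ref{cor:pairs}, with Theorem~\ref{thm:sofic-fixed-cover-comparison} in place of Corollary~\ref{cor:fixed-cover}.

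Let $(x,y)$ be a sofic mean dimension pair with respect to $\Sigma$, and let $\cU$ be any standard open cover distinguishing $(x,y)$. By the definition of a sofic mean dimension pair, $\mdim_\Sigma(X,G,\cU)>0$. In particular it is a positive real number, not $-\infty$, so its cube is strictly positive. Theorem~\ref{thm:sofic-fixed-cover-comparison} gives
\[
0<\mdim_\Sigma(X,G,\cU)^3\leq\frac{2\pi e^3+1}{\log2}\,|\cU|^4\,h_\Sigma(X,G,\cU).
\]
The only point needing care is the extended-value convention. If $h_\Sigma(X,G,\cU)$ were $-\infty$ (empty model spaces), the right side would be $-\infty$ under the stated conventions, contradicting the positive left side. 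The same holds if it were $0$. Hence $h_\Sigma(X,G,\cU)>0$. Since $\cU$ was an arbitrary standard cover distinguishing $(x,y)$, the pair is a sofic entropy pair with respect to $\Sigma$.

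For the second statement, sofic UPMD means $\mdim_\Sigma(X,G,\cU)>0$ for every standard open cover $\cU$. The same inequality then gives $h_\Sigma(X,G,\cU)>0$ for every standard $\cU$, which is sofic UPE. This uses the same approximation $\Sigma$ throughout, and it does not need any Blanchard-type characterization in terms of pairs.

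I do not expect a real obstacle here. The only delicate step is the handling of $-\infty$ values described above.
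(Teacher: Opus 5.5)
Your proposal is correct and follows the paper's proof, which likewise repeats the argument of Corollary~\ref{cor:pairs} with Theorem~\ref{thm:sofic-fixed-cover-comparison} in place of Corollary~\ref{cor:fixed-cover}, applying it to each standard cover. Your explicit check that $h_\Sigma(X,G,\cU)$ can be neither $-\infty$ nor $0$ under the paper's extended-value conventions is a correct elaboration of that one-line argument.
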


\begin{proof}
As for Corollary~\ref{cor:pairs}, using
Theorem~\ref{thm:sofic-fixed-cover-comparison} in place of
Corollary~\ref{cor:fixed-cover}.
\end{proof}

\section{Further consequences}\label{sec:consequences}

\begin{Remark}
The converse to the first assertion of Corollary~\ref{cor:pairs} does not hold. If
$0<\htop(X,T)<\infty$, then $(X,T)$ has an entropy
pair~\cite{Blanchard-entropy-pairs}, but $\mdim(X,T)=0$, so it has no mean
dimension pair.
\end{Remark}

\begin{Corollary}\label{cor:weak-mixing}
Every topological dynamical system with UPMD is weakly mixing.
\end{Corollary}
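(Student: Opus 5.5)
The plan is to pass through uniformly positive entropy and invoke Blanchard's theorem that UPE implies weak mixing~\cite{blanchard-upe-cpe}. The statement then becomes a direct consequence of Corollary~\ref{cor:pairs}. There are three steps.

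\emph{Step 1.} Let $(X,T)$ have UPMD. I would first check that every pair of distinct points is a mean dimension pair. If $(x,y)\in X^2\setminus\Delta_X$ and $\cU$ is a standard open cover distinguishing $(x,y)$, then $\cU$ is in particular a standard open cover. So UPMD gives $\mdim(X,T,\cU)>0$ directly from the definitions.

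\emph{Step 2.} By Corollary~\ref{cor:pairs}, every such pair is an entropy pair. Equivalently, applying Corollary~\ref{cor:fixed-cover} to each standard cover $\cU$, the inequality
\[
0<\mdim(X,T,\cU)^3\leq\frac{2\pi e^3+1}{\log2}\,|\cU|^4\,\htop(X,T,\cU)
\]
forces $\htop(X,T,\cU)>0$. Hence $(X,T)$ has UPE.

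\emph{Step 3.} Blanchard proved in~\cite{blanchard-upe-cpe} that a UPE system is (topologically) weakly mixing, i.e.\ $(X\times X,T\times T)$ is transitive. His argument shows that UPE systems are disjoint from minimal systems of zero entropy, and in particular have no nontrivial equicontinuous factors; together with the transitivity that UPE forces, this yields weak mixing. Applying this to $(X,T)$ gives the corollary.

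The only substantive input is Blanchard's theorem, which we cite. I would double-check two points. First, the notion of weak mixing in~\cite{blanchard-upe-cpe} should be topological weak mixing, matching the intended statement. Second, the degenerate case of a one-point $X$ should be harmless: there are no standard covers, so the system is vacuously UPMD and UPE, and it is trivially weakly mixing. With these checks, the proof reduces to citing Corollary~\ref{cor:pairs} and~\cite{blanchard-upe-cpe}.
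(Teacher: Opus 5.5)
Your argument is correct and is exactly the paper's proof: UPMD gives UPE by Corollary~\ref{cor:pairs} (equivalently, by Corollary~\ref{cor:fixed-cover} applied to each standard cover), and Blanchard's theorem \cite[Proposition~2]{blanchard-upe-cpe} then gives weak mixing. Drop the sketch of Blanchard's argument in Step~3 and simply cite the result, since the sketch is not needed and is misleading: for non-minimal transitive systems, having no nontrivial equicontinuous factor does not imply weak mixing.
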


\begin{proof}
By Corollary~\ref{cor:pairs}, UPMD implies UPE. Blanchard proved that
every system with UPE is weakly mixing~\cite[Proposition~2]{blanchard-upe-cpe}.
\end{proof}

Recall the hub-and-spoke construction
from~\cite{Barak-Lindenstrauss-UPMD}. Let
$X\subset\{1,\ldots,a\}^{\mathbb Z}$ be a symbolic system, choose distinct
points $p_1,\ldots,p_a$ on the boundary of the unit disk, and define
\[
\Phi:X\times[0,1]^{\mathbb Z}\longrightarrow\overline{\mathbb D}^{\mathbb Z},
\qquad
\Phi(x,t)_i=t_i p_{x_i}.
\]
We write $\Spoke(X)=\Phi(X\times[0,1]^{\mathbb Z})$ and equip it with the
shift $\sigma$.

\begin{Remark}\label{rem:not-strong-mixing}
The conclusion of Corollary~\ref{cor:weak-mixing} cannot in general be
strengthened to strong mixing. Blanchard constructed symbolic systems with UPE
which are not strongly mixing~\cite[Example~5]{blanchard-upe-cpe}. Let
$(X,\sigma)$ be such a system. Then
$\Spoke(X)$ has UPMD by
\cite[Theorem~1.5]{Barak-Lindenstrauss-UPMD}.

Since $X$ is not strongly mixing, there exist nonempty cylinder sets
\[
A=\{x\in X:x_i=a_i,\ i\in F\},
\qquad
B=\{x\in X:x_i=b_i,\ i\in F'\},
\]
with $F,F'\subset\mathbb Z$ finite, and a sequence $n_j\to\infty$ such that
\[
A\cap\sigma^{-n_j}B=\varnothing
\qquad(j\geq1).
\]
Define
\[
\widehat A
=
\{z\in\Spoke(X):
z_i=t_i p_{a_i}\text{ for some }t_i>1/2,\ i\in F\},
\]
and define $\widehat B$ analogously. These sets are nonempty, and they are
open in $\Spoke(X)$ because distinct spokes meet only at $0$.

For every $n\in\mathbb Z$,
\[
\widehat A\cap\sigma^{-n}\widehat B\neq\varnothing
\quad\Longleftrightarrow\quad
A\cap\sigma^{-n}B\neq\varnothing.
\]
Indeed, if $z=\Phi(x,t)\in\widehat A\cap\sigma^{-n}\widehat B$, the radii of
$z$ at the coordinates in $F$ and in $F'+n$ are positive, so they determine the
labels: $x_i=a_i$ for $i\in F$ and $x_{i+n}=b_i$ for $i\in F'$, i.e.\
$x\in A\cap\sigma^{-n}B$. Conversely, if $x\in A\cap\sigma^{-n}B$, then
$\Phi(x,(1)_{i\in\mathbb Z})\in\widehat A\cap\sigma^{-n}\widehat B$.
Hence
\[
\widehat A\cap\sigma^{-n_j}\widehat B=\varnothing
\qquad(j\geq1),
\]
so $\Spoke(X)$ is not strongly mixing.
\end{Remark}
\begin{Corollary}\label{cor:spoke-characterization}
Let $(X,\sigma)$ be a nonempty symbolic system. The following are equivalent:
\begin{enumerate}[label=\textup{(\arabic*)}]
\item $(X,\sigma)$ has UPE;
\item $(\Spoke(X),\sigma)$ has UPMD;
\item $(\Spoke(X),\sigma)$ has UPE.
\end{enumerate}
\end{Corollary}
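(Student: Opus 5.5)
The plan is to prove the cycle of implications (1)$\Rightarrow$(2)$\Rightarrow$(3)$\Rightarrow$(1).

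\emph{(1)$\Rightarrow$(2).} This is exactly \cite[Theorem~1.5]{Barak-Lindenstrauss-UPMD}, already quoted in Remark~\ref{rem:not-strong-mixing}, and I will simply cite it.

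\emph{(2)$\Rightarrow$(3).} This is the second statement of Corollary~\ref{cor:pairs}, applied to the system $(\Spoke(X),\sigma)$.

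\emph{(3)$\Rightarrow$(1).} This is the only step needing an argument. Since UPE passes to factors (Blanchard~\cite{blanchard-upe-cpe}), it would suffice to exhibit $X$ as a factor of $\Spoke(X)$. There is, however, no continuous factor map $\Spoke(X)\to X$, because the label $x_i$ is forgotten when $t_i=0$. So I will argue directly with covers.

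Fix a standard open cover $\{U,V\}$ of $X$. By compactness and non-density, I first replace it by a standard cover whose sets are finite unions of cylinders depending on coordinates in a finite window $F$. To do this, pick $x\notin\overline V$ and $y\notin\overline U$, take cylinder neighbourhoods $C_x\subseteq X\setminus\overline V$ and $C_y\subseteq X\setminus\overline U$, and set $U'=X\setminus C_y$ and $V'=X\setminus C_x$. Then $U'\subseteq U$, $V'\subseteq V$, both are clopen and non-dense, and $U'\cup V'=X$ provided $C_x\cap C_y=\varnothing$, which holds since $C_x\subseteq U\setminus V$ and $C_y\subseteq V\setminus U$. Entropy is monotone under refinement, so it suffices to show $\htop(X,\sigma,\{U',V'\})>0$.

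Lift to $\Spoke(X)$ by setting
\[
\widehat C_x=\{z:\ z_i=t_ip_{(x)_i},\ t_i>1/2,\ i\in F\},
\]
defined as in Remark~\ref{rem:not-strong-mixing}, and similarly $\widehat C_y$. Then define $\widehat U=\Spoke(X)\setminus\overline{\widehat C_y}$ and $\widehat V=\Spoke(X)\setminus\overline{\widehat C_x}$. The closures only involve $t_i\geq1/2$, where the label is determined, so these closures are disjoint. Hence $\{\widehat U,\widehat V\}$ is a standard open cover of $\Spoke(X)$: it is non-dense because $\widehat C_x,\widehat C_y$ are nonempty open sets. By (3), it has positive entropy.

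To transfer this entropy down to $X$, consider the closed invariant subsystem $\{\Phi(x,\mathbf 1):x\in X\}$, which is homeomorphic to $X$ via $x\mapsto\Phi(x,\mathbf 1)$. On it, $\widehat U$ restricts to $U'$ and $\widehat V$ restricts to $V'$. But entropy of a cover of the big system does not directly bound entropy on a subsystem, so this must be compared carefully.

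The cleaner route is to note that any point $z=\Phi(x,t)$ lying in a cylinder set $\bigcap_k \sigma^{-k}W_k$ with $W_k\in\{\widehat U,\widehat V\}$ can be sent to the point $\Phi(x',\mathbf 1)$ chosen with the same labels where $t_i>1/2$. This is delicate, since positions with $t_i\le 1/2$ carry free labels, and I expect it to be the main obstacle. I will try instead to show directly that $\mathcal N$ of the joined lifted cover is at most $\mathcal N$ of the joined cover $\{U',V'\}_0^{n-1}$ on $X$. Given a minimal subcover $\{W_a\}$ of $X$, the lifted sets $\widehat W_a$ should cover $\Spoke(X)$: for $z$, choose $x$ realizing $z$ and note that $z\in\widehat W_a$ whenever $x\in W_a$, because $x\notin \sigma^{-k}C_y$ implies $z\notin\sigma^{-k}\overline{\widehat C_y}$ whenever the radii satisfy $t\ge1/2$ on the relevant coordinates, and trivially otherwise. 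This gives
\[
\htop(\Spoke(X),\sigma,\widehat{\cU})\le\htop(X,\sigma,\{U',V'\}),
\]
and hence (1).
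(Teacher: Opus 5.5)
Your argument is correct and is essentially the paper's proof of (3)$\Rightarrow$(1), run directly instead of by contrapositive. The correspondence is as follows:
\begin{itemize}
\item your cylinders $C_y, C_x$ play the roles of the paper's $A, B$;
\item your cover $\{U',V'\}$ is its $\mathcal V'$;
\item the closures of $\widehat C_y, \widehat C_x$ lie in its closed sets $A^*, B^*$;
\item your final step is exactly its bound $\mathcal N(\mathcal V_0^{n-1})\leq\mathcal N({\mathcal V'}_0^{n-1})$. That step takes any $(x,t)$ with $z=\Phi(x,t)$ and uses that $\sigma^k\Phi(x,t)\in\overline{\widehat C_y}$ forces $\sigma^kx\in C_y$.
\end{itemize}

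One slip needs fixing. Since $C_y\cap U=\varnothing$ and $C_x\cap V=\varnothing$, the correct inclusions are $U\subseteq U'$ and $V\subseteq V'$, not $U'\subseteq U$ and $V'\subseteq V$. That is, $\{U,V\}\succ\{U',V'\}$, which is precisely the direction giving $\htop(X,\sigma,\{U,V\})\geq\htop(X,\sigma,\{U',V'\})$. With the inclusions as you wrote them, the monotonicity reduction would run the wrong way.

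The step you flagged as the ``main obstacle'' is not one. You never need to replace $z$ by a point $\Phi(x',\mathbf 1)$, and your last paragraph already gives a complete argument, so the hedging there can go. The detours through factors and the subsystem $\Phi(X\times\{\mathbf 1\})$ can also be dropped. Your assertion that no factor map $\Spoke(X)\to X$ exists is false in general, for instance when $X$ is a single fixed point, though you never use it.
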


\begin{proof}
The implication (1)$\Rightarrow$(2) is
\cite[Theorem~1.5]{Barak-Lindenstrauss-UPMD}. The implication
(2)$\Rightarrow$(3) is Corollary~\ref{cor:pairs}.
The implication (3)$\Rightarrow$(1) was noted after Theorem~1.5 in
\cite{Barak-Lindenstrauss-UPMD}; we include a proof.

Suppose now that $(X,\sigma)$ does not have UPE, and let $\{U,V\}$ be a
standard open cover of $X$ with $\htop(X,\sigma,\{U,V\})=0$. Since cylinder
sets form a base for the topology of $X$, there are nonempty cylinder sets
\[
A=\{x\in X:x_i=a_i,\ i\in E\},
\qquad
B=\{x\in X:x_i=b_i,\ i\in E'\}
\]
with $A\cap U=\varnothing$ and $B\cap V=\varnothing$. Then
$A\cap B=\varnothing$, so $\mathcal V'=\{X\setminus A,X\setminus B\}$ is an
open cover of $X$, and $\{U,V\}\succ\mathcal V'$. Hence
$\htop(X,\sigma,\mathcal V')=0$. Let
\[
A^*=\{z\in\Spoke(X):z_i=t_ip_{a_i}\text{ for some }t_i\geq1/2,\ i\in E\},
\]
and define $B^*$ analogously. These are closed sets, and they contain the
nonempty open sets $\widehat A$ and $\widehat B$ defined as in
Remark~\ref{rem:not-strong-mixing}. As in that remark, if $z=\Phi(x,t)$ and
$\sigma^kz\in A^*$, then $\sigma^kx\in A$, and similarly for $B$. In
particular $A^*\cap B^*=\varnothing$, so
$\mathcal V=\{\Spoke(X)\setminus A^*,\Spoke(X)\setminus B^*\}$ is a standard
open cover of $\Spoke(X)$. If
$x\in\bigcap_{k=0}^{n-1}\sigma^{-k}(X\setminus C_k)$ with
$C_k\in\{A,B\}$, then
$\Phi(x,t)\in\bigcap_{k=0}^{n-1}\sigma^{-k}(\Spoke(X)\setminus C_k^*)$ for
every $t$. Hence
$\mathcal N(\mathcal V_0^{n-1})\leq\mathcal N({\mathcal V'}_0^{n-1})$ for
every $n$, so $\htop(\Spoke(X),\sigma,\mathcal V)=0$ and
$(\Spoke(X),\sigma)$ does not have UPE. This proves (3)$\Rightarrow$(1).
\end{proof}

\medskip

\Needspace{6\baselineskip}
\medskip
\noindent\textbf{AI-assisted tools.}
ChatGPT (OpenAI), using the GPT-5.6 Sol model, was used for LaTeX editing,
phrasing suggestions, and as an auxiliary tool for checking the arguments.
The mathematical results, proof strategy, and arguments were developed by the
author. The author independently verified the mathematical content and takes
full responsibility for the paper.

\printbibliography
\end{document}